\def\Z{\mathbb{Z}}
\def\R{\mathbb{R}}
\def\<{\langle}
\def\>{\rangle}
\DeclareMathOperator{\Vol}{Vol}
\DeclareMathOperator{\V}{V}
\theoremstyle{definition}
\newtheorem{defn}{Definition}[section]
\newtheorem{exmp}{Example}[section]
\newtheorem*{convention*}{Convention}
\newtheorem*{remark*}{Remark}
\newtheorem{theorem}{Theorem}
\newtheorem{lemma}{Lemma}
\begin{document}

\title{Polyhedra of small relative mixed volume}
\author{Ziyi Zhang\thanks{zizh@umich.edu\newline The research is done in the framework of the REU programme at the HSE Univeristy, Moscow:\newline \url{https://math.hse.ru/en/reu}}}

\maketitle

\begin{abstract}
We classify all tuples of lattice polyhedra of relative mixed volume 1 and all minimal (by inclusion) tuples of polyhedra of relative mixed volume 2.  We also prove a conjecture by A. Esterov, which states that all tuples with finite relative mixed volume are contained in one of finitely many ones that are minimal by inclusion.
\end{abstract}

\section{Introduction}

Mixed volumes of bounded lattice polytopes are extensively studied due to their relation to algebraic geometry. This relation is based on the Kouchnirenko--Bernstein formula for the number of roots of a general system of polynomial equations \cite{bernst}.

In particular, it is known that tuples of lattice polytopes of a given mixed volume in the space of a given dimension admit an essentially finite classification up to automorphisms of the lattice, see \cite{galois}. This classification is explicitly described for the lattice mixed volume one in any dimension \cite{mv1} and for the lattice mixed volume up to four in dimensions up to three \cite{mv4} (the lattice mixed volume up to four is especially interesting due to its relation to systems of equations solvable by radicals, see \cite{rad} and \cite{galois}).

Similarly, there is a version of the Kouchnirenko--Bernstein formula for the multiplicity of an isolated root of a system of analytic equations in terms of the so called relative mixed volume of pairs of lattice polytopes, see \cite{rel5}, \cite{rel6} and Section \ref{Smv} below. The relative mixed volume intricately alters the properties of the classical one. For instance, it satisfies the inverse Aleksandrov--Fenchel inequality \cite{af}.

This paper initiates the classification of tuples of polyhedra with a given relative mixed volume. This problem differs from its counterpart for bounded polytopes in that there are infinitely many tuples of a given relative mixed volume in a given dimension. However, we prove a conjecture by A. Esterov, instead that each of these tuples is contained in one of finitely many ones that are minimal by inclusion.

In every dimension, we obtain the classification of all (infinitely many) tuples of lattice volume 1 and (finitely many) minimal tuples of lattice volume 2. See Sections \ref{Smv1} and \ref{Smv2} respectively. These results outrun the current knowledge of the bounded case (where the classification of tuples of lattice mixed volume 2 is unknown starting from dimension 4). Finally, in section \ref{final}, we generalize the previous results and prove the conjecture that there are finitely many minimal tuples for any finite volume and dimension. 

Our proof has to be completely different from the proof for bounded polytopes, because the latter relies on the Aleksandov--Fenchel inequality, which does not hold in our setting (see above). One important tool that we use is the notion of so called interlaced polyhedra, see \cite{interl}; a version for bounded polytopes is recently given in \cite{interlb}.

\section{Relative mixed volume}\label{Smv}

Before we start introducing mixed volume of polyhedra, let us recall a few definitions. 
\begin{defn}
A \textit{polyhedron} in $\mathbb{R}^n$ is an intersection of finitely many closed half-spaces. A \textit{lattice polyhedron} is a polyhedron with all vertices contained in $\mathbb{Z}^n$. 
\end{defn}

In this research, we primarily focus on tuples of \textit{pairs of polyhedra} that differ by a bounded set and the difference between their volumes. \\

First, we introduce the $k$-dimensional lattice volume of polytopes.\\

Given a $k$-dimensional rational subspace $V$ in $\R^n$, we will define the $k$-dimensional lattice volume of polytopes in $V$ as follows. Denote $L = \Z^n \cap V$ and choose a linear map $f: \R^k \to \R^n$, such that $f(\mathbb{Z}^k) = L$.
\begin{defn}\label{lat_vol}
The \textit{k-dimensional lattice volume} of a polytope $K$ in the subspace $V$ is $k!$ times the metric volume of its preimage $f^{-1}(K) \subseteq \R^k$.
\end{defn}
Notice that the lattice volume as defined above is independent of the map chosen, because maps preserving the lattice preserve the volume as well. The lattice volume of a lattice polytope is always an integer, and the lattice volume of the standard simplex in $\mathbb{R}^n$ is equal to $1$.\\

\begin{minipage}{0.65\textwidth}
\begin{exmp}
Consider the intersection of the line $x+y=0$ and the square $[-1,1]^2$ laying in a plane, we will define the linear map $f: \mathbb{R} \rightarrow \mathbb{R}^2$, such that $f(1) = (1,-1)$. Correspondingly, we see the preimage of the intersection is $[-1,1]$, which has metric volume $2$, so it also has lattice volume $ 2$. As a result, we conclude that the intersection of the line $x+y=0$ and the square $[-1,1]^2$ has lattice volume $2$.\\
Now, consider  the intersection of the plane $x+y+z=0$ and the cube $[-1,1]^3$. We define the linear map $f: \mathbb{R}^2 \rightarrow \mathbb{R}^3$ as $f(1,0) = (1,0,-1)$ and $f(0,1) = (1,-1,0)$. Take the preimage of the intersection, we get a hexagon with the metric volume $3$, i.e. the lattice volume $6$. Correspondingly, the intersection of the plane $x+y+z=0$ and the cube $[-1,1]^3$ has lattice volume $6$. 
\end{exmp}
\end{minipage}
\begin{minipage}{0.3\textwidth}
\includegraphics{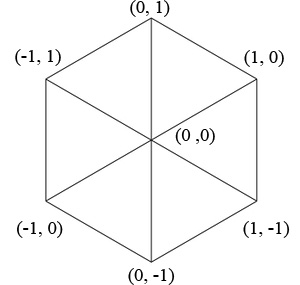}
\end{minipage}\\

\begin{convention*}
Note that all references to volume in the rest of the text are referring to lattice volume, unless otherwise stated.
\end{convention*}

\begin{defn}
For any two sets $K,L \in \mathbb{R}^n$, we call $K+L := \{x+y\;|\; x \in K, y \in L\}$ the \textit{Minkowski sum}, or briefly, the sum of $K$ and $L$.
\end{defn}

\begin{defn}\label{mixed_top}
Let $\mathcal{M}$ denote the set of all convex polytopes in $\mathbb{R}^n$, considered as a semigroup with respect to the Minkowski addition. The \textit{mixed volume} is the unique symmetric multilinear function $\Vol: \underbrace{\mathcal{M} \times ... \times \mathcal{M}}_{n} \rightarrow \mathbb{R}$ such that $\Vol(K,...,K)$ is equal to the volume of $K$ for all $K \in \mathcal{M}$.
\end{defn}

\begin{remark*}
The mixed volume may be defined in different ways (see e.g. \cite{thm2}, Definition IV.3.3), but it is known to satisfy the three aforementioned properties (see e.g. \cite{thm2}, Lemmas IV.3.4, 3.5, 3.6), and these three properties together uniquely define it, because they imply the following formula for the mixed volume:
\begin{align*}
\Vol(K_1,\ldots, K_n) = \sum_{I \subseteq \{1,\ldots, n\}}(-1)^{n-|I|}\Vol(\sum_{i \in I} K_i)
\end{align*}
where $K_1, \ldots, K_n$ are convex polytopes.
(see e.g. [GE91], Theorem IV.3.7).
\end{remark*}

\begin{remark*}
Notice that if $K_1, \ldots, K_n$ are \textit{lattice polytopes} in $\mathbb{R}^n$, then  $\Vol(K_1,\ldots, K_n)$ is an integer, because so is the right hand side of the above formula.
\end{remark*}

We define the sum of pairs of polyhedra componentwise. For pairs of polyhedra $(A_1, B_1)$, $(A_2, B_2)$, the sum of the pairs is defined by $(A_1,B_1)+(A_2,B_2)=(A_1+A_2,B_1+B_2)$. 

\begin{exmp}
Figure 1 is an example of the Minkowski sum of two pairs of polyhedra. As one can see, $A = C + \mathrm{conv}(\{(2,0),(0,2)\})$, $B = C + \mathrm{conv}(\{(3,0),(1,1),(0,3)\})$, where $C$ represents the first quadrant. Then, by definition of Minkowski sum, $A+B = C+C+\mathrm{conv}(\{(2,0)+(3,0),(0,2)+(3,0),(2,0)+(1,1),(0,2)+(1,1),(2,0)+(0,3),(0,2)+(0,3)\}) = C+\mathrm{conv}(\{(5,0),(3,1),(1,3),(0,5)\})$
\end{exmp}

\begin{figure}[h]
    \centering
    \includegraphics[width=\textwidth]{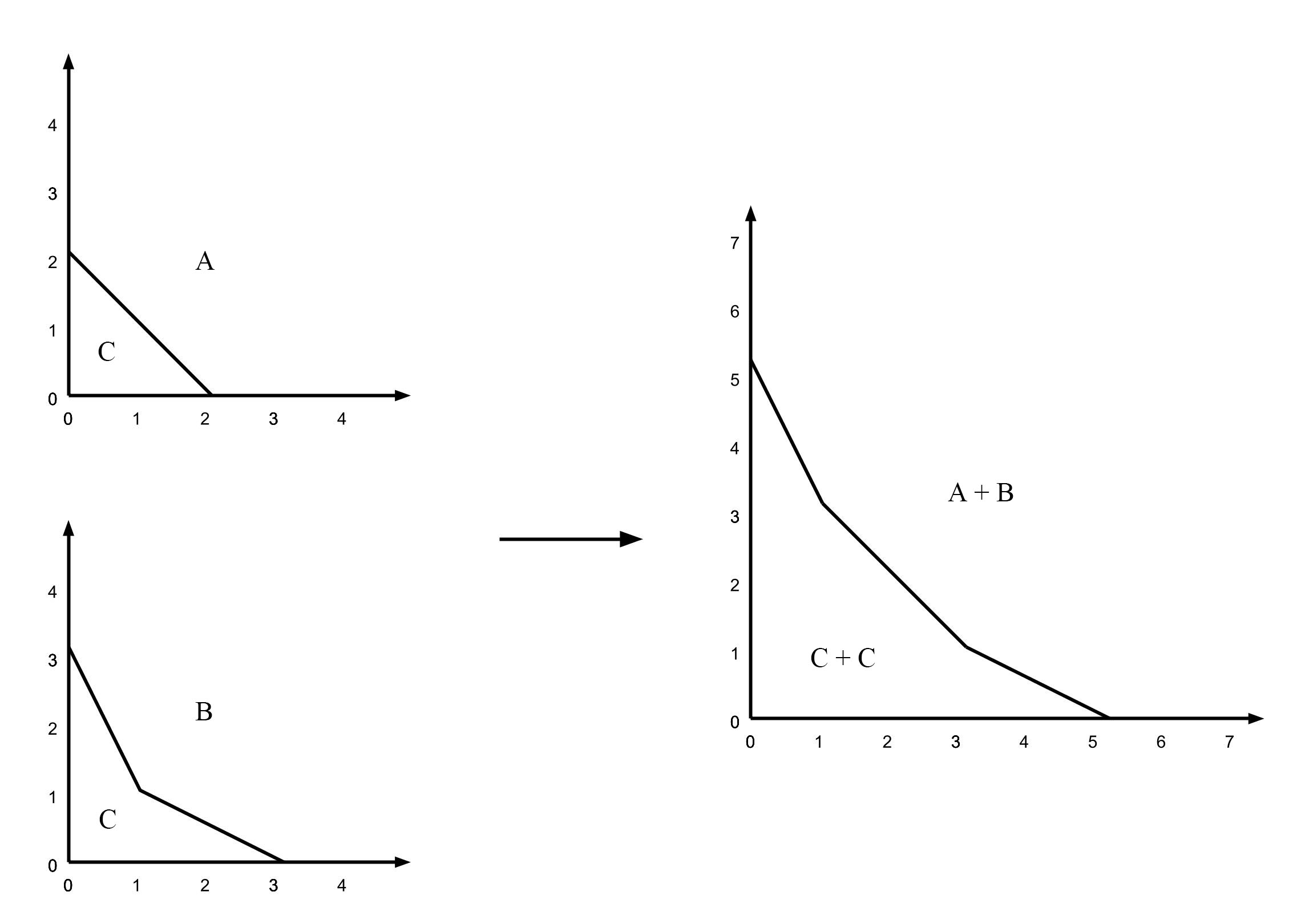}
    \caption{Two pairs of polyhedra underwent Minkowski addition, where $C$ represents first quadrant. In this case, $A = \mathrm{conv}(\{(2, 0),(0, 2)\})+C$ and $B = \mathrm{conv}(\{(3, 0),(1, 1),(0, 3)\})+C$. As such, by taking the Minkowski sum of pairs of polyhedra, we have $(C,A)+(C,B) = (C+C,A+B)$. In this case, we have $C+C=C$, so $(C,A)+(C,B) = (C,A+B)$}
    \label{fig:my_label}
\end{figure}

\begin{defn}\label{support}
Let $K \subset \mathbb{R}^n$ be a convex polyhedron  (not necessarily bounded). Its \textit{support function} $K(\cdot)$ is defined as 
$$K(\gamma) = \inf_{x \in K} \gamma(x)$$
for every covector $\gamma \in (\mathbb{R}^n)^*$. The set $\{\gamma | K(\gamma) > -\infty \} \subset (\mathbb{R}^n)^*$ is called the support cone of $K$.
\end{defn}

\begin{defn}
For a convex rational polyhedral cone $\Gamma$, denote  $\mathcal{M}_{\Gamma}$ as the set of pairs of convex polyhedra $(A,B)$ such that $A$ and $B$ have the support cone $\Gamma$ and the symmetric difference $A\Delta B:=(A \backslash B)\cup (B \backslash A)$ is bounded.
\end{defn}

Note that the set $M_\Gamma$ is a semigroup with respect to the componentwise summation $(A_1,B_1)+(A_2,B_2)=(A_1+A_2,B_1+B_2)$. 

\begin{defn}\label{mixed}[see \cite{rel5} (the first definition) and \cite{rel6} (Definition 7 in the journal version, Definition 1.1 in the extended version)]
The \textit{volume} $\V(A,B)$ of a pair of polyhedra $(A,B) \in \mathcal{M}_{\Gamma}$ is the difference of the volumes of the sets $A \backslash B$ and $B \backslash A$. The \textit{mixed volume} of pairs of polyhedra with support cone $\Gamma \subset (\mathbb{R}^n)^*$ is the symmetric multilinear function $\Vol_{\Gamma}: \underbrace{\mathcal{M}_{\Gamma} \times ... \times \mathcal{M}_{\Gamma}}_{n} \rightarrow \mathbb{R}$, such that $\Vol_{\Gamma}((A,B),...,(A,B)) = \V(A,B)$ for every pair $(A,B) \in \mathcal{M}_{\Gamma}$.
\end{defn}

\begin{remark*}
Similar to the classical mixed volumes, the mixed volume of pairs of polyhedra satisfies the formula:
\begin{align*}
\Vol_\Gamma((A_1,B_1),\ldots, (A_n,B_n)) = \sum_{I \subseteq \{1,\ldots, n\}}(-1)^{n-|I|}\V(\sum_{i \in I} (A_i,B_i))
\end{align*}
where $(A_i,B_i) \in \mathcal{M}_{\Gamma}$ for $i$ from $1$ to $n$. In particular, by this formula, the mixed volume of pairs of lattice polyhedra is an integer (see \cite{rel6} Lemma 3 (4)).
\end{remark*}
Proof of existence of the mixed volume was given in \cite{rel6}, Lemma 3(1-3). In this research, we will primarily be focused on the case where the support cone is the positive orthant in $\mathbb{R}^n$, denoted by $C$. In particular, we shall be mostly interested in the pairs of polyhedra of the form $(C, B)$. 

\begin{convention*}
In the rest of the paper, all unbounded polyhedra are assumed to have bounded difference with the positive orthant $C$, unless otherwise stated.
\end{convention*}

\begin{defn}\label{def1}\cite{interl}
{
    Given $n$ polyhedra $B_1,...,B_n$ in $\R^n$, $B_1,...,B_n$ are called \textit{interlaced} if for every $k = 0, ... , n-1$, every $k$-dimensional face of $B = \mathrm{conv}(B_1 \cup ... \cup B_n)$ intersects at least $k+1$ polyhedra of $B_1, ... B_n$.
}
\end{defn}

\begin{exmp}
As shown in Figure 1, polyhedra $A$ and $B$ are interlaced in this case, because $\mathrm{conv}(A \cup B) = A$ and every $k$-face of $A$ intersects at least $k+1$ polyhedra. For example, the face $\mathrm{conv}(\{(2,0),(0,2)\})$ intersects both $A$ and $B$, and the face $(2,0)$ intersects $A$.
\end{exmp} 

There are some unique properties which drastically simplify the calculation of mixed volume of interlaced polyhedra:

\begin{theorem}\cite{interl}\label{int}
Given $n$ polyhedra $B_1,...,B_n$ in $\R^n$ and $B = \mathrm{conv}(B_1 \cup ... \cup B_n)$, if $B_1,...,B_n$ in $\R^n$ are interlaced, then $\Vol_C((C,B_1),...,(C,B_n)) = \Vol(C\backslash B)$. Otherwise, $\Vol_C((C,B_1),...,(C,B_n)) > \Vol(C\backslash B)$.
\end{theorem}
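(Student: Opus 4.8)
The plan is to split the statement into the inequality $\Vol_C((C,B_1),\dots,(C,B_n))\ge\Vol(C\setminus B)$, which I expect to be soft, and the description of when this is an equality, which is the real content. Throughout I would first make the harmless reduction to the case $B_i\subseteq C$ for every $i$: all the polyhedra in question differ from $C$ by a bounded set, so one may translate away the part of each $B_i$ sticking out of $C$ without changing either side; then $B=\mathrm{conv}(B_1\cup\dots\cup B_n)\subseteq C$ still has recession cone $C$, the set $C\setminus B$ is a bounded lattice region, and $\V(C,B)=\Vol(C\setminus B)$ by Definition \ref{mixed}, so the asserted identity reads $\Vol_C((C,B_1),\dots,(C,B_n))=\V(C,B)$.

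For the inequality, note that $B\supseteq B_i$ for every $i$, and that the relative mixed volume $\Vol_C$ is monotone: enlarging one polyhedron $B_i$ inside $C$ (keeping the recession cone $C$) shrinks the bounded complement $C\setminus B_i$, and this can only decrease $\Vol_C$. Replacing $B_1,\dots,B_n$ one at a time by the larger polyhedron $B$ therefore gives $\Vol_C((C,B_1),\dots,(C,B_n))\ge\Vol_C((C,B),\dots,(C,B))=\V(C,B)=\Vol(C\setminus B)$. To justify the monotonicity I would argue in the algebraic picture: by the relative Kouchnirenko--Bernstein theorem of \cite{rel5},\cite{rel6} the number $\Vol_C((C,B_1),\dots,(C,B_n))$ is the multiplicity $\mu_0$ at the origin of a generic polynomial system with Newton polyhedra $B_1,\dots,B_n$, and the multiplicity of an isolated zero is upper semicontinuous under the specialisation that shrinks a Newton polyhedron (deletes the monomials it no longer contains), which is exactly the desired inequality; alternatively one can use a representation of $\Vol_C$ as an integral of the nonnegative support-function defect $C(\gamma)-B_i(\gamma)$ against a nonnegative measure, in the spirit of the reverse Aleksandrov--Fenchel inequality of \cite{af}.

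For the equality case I would induct on $n$. The base case $n=1$ is immediate: $B=B_1=[a,\infty)$ with $a\ge 0$, both sides equal $a$, and the interlacing condition holds automatically. For the inductive step, the first point to establish is that interlacing is a condition local to the boundary of $B$: if $\gamma$ lies in the dual cone of $C$ and exposes a proper face $G=B_\gamma$ of dimension $k$, then $G=\mathrm{conv}\bigcup\{(B_i)_\gamma : B_i\cap G\ne\varnothing\}$, a proper face of $B$ contained in $G$ meets $B_i$ if and only if it meets $(B_i)_\gamma$, and consequently each instance of Definition \ref{def1} for $B_1,\dots,B_n$ in dimension below $k$ is an instance of Definition \ref{def1} for the induced lower-dimensional tuple sitting in the affine span of $G$. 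The plan would then be to combine this with a Kouchnirenko-type face decomposition --- obtained from the inclusion--exclusion identity in the Remark after Definition \ref{mixed} together with a finite polyhedral subdivision of the unbounded regions $C\setminus\sum_{i\in I}B_i$ adapted to the normal fans of the $B_i$ --- that writes $\Vol_C((C,B_1),\dots,(C,B_n))$ as a signed sum of lower-dimensional relative mixed volumes attached to the proper faces of $B$, together with the parallel decomposition of $\Vol(C\setminus B)=\V(C,B)$, which is compatible with taking convex hulls by the identity $B\cap\R^J=\mathrm{conv}\bigcup(B_i\cap\R^J)$ on coordinate subspaces $\R^J$. The induction hypothesis then matches the two decompositions term by term precisely on the faces where the induced configuration reaches enough of the $B_i$. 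If $B_1,\dots,B_n$ are interlaced, every term matches and $\Vol_C((C,B_1),\dots,(C,B_n))=\Vol(C\setminus B)$. If they are not, by definition some $k$-face $G$ of $B$ meets at most $k$ of the $B_i$; localising there, the induced configuration on $G$ has too few active members for its dimension, so by the already-proved lower-dimensional case its contribution to the $(C,B_i)$-decomposition strictly exceeds its contribution to the $(C,B)$-decomposition, while every other contribution is at least as large; hence $\Vol_C((C,B_1),\dots,(C,B_n))>\Vol(C\setminus B)$.

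The step I expect to be the main obstacle is exactly this Kouchnirenko-type face decomposition: choosing the finite polyhedral subdivisions of the unbounded regions $C\setminus\sum_{i\in I}B_i$ so that, after inclusion--exclusion, the terms reorganise into one genuine lower-dimensional relative mixed volume per proper face of $B=\mathrm{conv}(B_1\cup\dots\cup B_n)$, and then checking that passing from the pairs $(C,B_i)$ to $(C,B)$ preserves a face's contribution precisely on the interlaced faces. Getting this bookkeeping right in the presence of unbounded faces, of empty or coinciding induced faces $(B_i)_\gamma$, and of repeated polyhedra among the $B_i$ is where the real work lies; by comparison the inequality and the base case are routine.
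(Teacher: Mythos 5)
The paper does not actually prove this statement: Theorem \ref{int} is imported verbatim from \cite{interl} and used as a black box, so there is no in-paper argument to compare yours against. Judged on its own, your proposal establishes only the easy half. The inequality $\Vol_C((C,B_1),\dots,(C,B_n))\ge\Vol(C\setminus B)$ does follow from monotonicity, and you do not even need the algebraic semicontinuity detour: by multilinearity and Theorem \ref{mixv}, replacing $(C,B_1)$ by $(C,B)$ changes the mixed volume by $\sum_\gamma\bigl(B_1(\gamma)-B(\gamma)\bigr)\Vol^{(n-1)}(B_2^\gamma,\dots,B_n^\gamma)$, and each term is nonnegative because $B_1\subseteq B$ forces $B_1(\gamma)\ge B(\gamma)$; telescoping over the $n$ slots gives the inequality. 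But the content of the theorem is the dichotomy: equality holds if and only if the $B_i$ are interlaced, with \emph{strict} inequality otherwise. Your treatment of this is a plan, not a proof. The ``Kouchnirenko-type face decomposition'' that is supposed to turn the telescoped sum into one lower-dimensional contribution per proper face of $B$, and the verification that a non-interlaced face forces a strictly positive defect (rather than a defect that could be cancelled or could vanish for degenerate reasons, e.g.\ when the relevant face mixed volumes $\Vol^{(n-1)}(B_2^\gamma,\dots,B_n^\gamma)$ are zero), is exactly where the theorem lives, and you explicitly defer it. In particular the delicate direction --- that failure of interlacing at a single $k$-face, which is a purely combinatorial incidence condition, always produces a nonzero term in the analytic expansion --- is asserted by appeal to a lower-dimensional case of the very statement being proved, without a base for that induction beyond $n=1$.

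A secondary but genuine error: the opening reduction to $B_i\subseteq C$ cannot be done by ``translating away the part of each $B_i$ sticking out of $C$.'' Translating $B_i$ changes $\V(C,B_i)$ and the mixed volume (the support function values $B_i(\gamma)$ all shift), and replacing $B_i$ by $B_i\cap C$ is not a translation and also changes both sides. The honest reading is that the statement implicitly assumes $B_i\subseteq C$ (as all polyhedra in the paper are, being of the form $C_{a_1,\dots,a_n}$ or convex hulls of $C$-subsets), since otherwise $\Vol(C\setminus B)$ would have to be replaced by the signed quantity $\V(C,B)$. You should either add that hypothesis or restate the conclusion with $\V(C,B)$; as written the reduction step is not valid.
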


\begin{exmp}
Since $A$ and $B$ in Figure 1 are interlaced, we conclude that the mixed volume of the pairs $(C,A), (C,B)$ is simply $\Vol(C\backslash \mathrm{conv}(A \cup B)) = \Vol(C \backslash A) = 4$.
\end{exmp}

On the other hand, we will introduce another theorem to calculate the mixed volume in the general case. 

The lattice volume of polytopes in a rational subspace as in Definition \ref{lat_vol} and the mixed volume in Definition \ref{mixed_top} give rise to the mixed volume of polytopes in a rational subspace.

\begin{defn}
Given a $n-$dimensional rational vector subspace $V$, the \textit{mixed volume} of polytopes parallel to $V$ is the unique function of $n$ polytopes contained in affine spaces parallel to $V$, satisfying the following properties: it is symmetric, multilinear, invariant under shifts of arguments, and equal to the lattice volume once its arguments are equal.
\end{defn}

\begin{theorem}\label{mixv}\cite{rel6}
Given $n$ polyhedra $B_1,...,B_n$ in $\R^n$, we have 
$$\Vol_C((C,B_1),...,(C,B_n)) = \sum_\gamma B_1(\gamma) \Vol^{(n-1)}(B_2^\gamma, ... , B_n^\gamma)$$
where $B_1(\gamma)$ is the support function (Definition \ref{support}), and $\gamma$ runs over all primitive covectors with positive coordinates, and $\Vol^{(n-1)}$ is the mixed volume of $(n-1)$ bounded polytopes parallel to $(n-1)$-dimensional subspace $\ker (\gamma)$. The set of all $x\in B_i$ such that $\gamma(x)=B_i(\gamma)$ is a face of $B_i$, it will be called the support face of $\gamma$ and denoted by $B_i^\gamma$.
\end{theorem}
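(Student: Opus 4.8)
The plan is to reduce the statement to a single ``first variation'' (Minkowski-type) identity and to prove that identity by the same slab-counting argument that proves Minkowski's formula for bounded polytopes; the only novelty is the bookkeeping forced by the unboundedness of the $B_i$.

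\textbf{Preliminary reductions.} Since each pair $(C,B_i)$ lies in $\mathcal{M}_C$, the recession cone of $B_i$ is $C$ and $B_i\,\Delta\,C$ is bounded. A short argument then shows $B_i\subseteq C$, that the support function $B_i(\cdot)$ vanishes on the coordinate covectors $e_1,\dots,e_n$, and that every facet of $B_i$ has normal either some $e_j$ or a strictly positive primitive covector, of which there are finitely many. In particular $\V(C,B_i)=\Vol(C\setminus B_i)$ is the volume of a bounded region, each face $B_i^\gamma$ with $\gamma$ strictly positive is a bounded polytope sitting in an affine translate of the rational hyperplane $\ker\gamma$, and the right-hand side of the theorem is a finite sum (a term vanishes unless $\gamma$ is a facet normal of $B_2+\dots+B_n$, and $B_1(\gamma)=0$ for $\gamma$ on the boundary of the positive orthant). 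Now fix $B_1$. Both sides of the asserted identity are symmetric multilinear functions of $(B_2,\dots,B_n)$: for the left-hand side this is Definition \ref{mixed}; for the right-hand side it follows from $(Q+Q')^\gamma=Q^\gamma+(Q')^\gamma$, from $(\lambda Q)^\gamma=\lambda Q^\gamma$, and from the multilinearity and symmetry of $\Vol^{(n-1)}$ on polytopes parallel to $\ker\gamma$. A symmetric multilinear function is determined by its restriction to the diagonal, so it suffices to prove, for all $B_1,B$ with $(C,B_1),(C,B)\in\mathcal{M}_C$, the semi-diagonal identity
\[
\Vol_C\big((C,B_1),(C,B),\dots,(C,B)\big)=\sum_\gamma B_1(\gamma)\,\Vol^{(n-1)}(B^\gamma,\dots,B^\gamma),
\]
the right-hand side being $\sum_\gamma B_1(\gamma)\,\mathrm{vol}_{n-1}(B^\gamma)$, where $\mathrm{vol}_{n-1}$ is the $(n-1)$-dimensional lattice volume of the facet $B^\gamma$; the general formula follows by expressing both sides through their diagonals in $B_2,\dots,B_n$ and matching term by term.

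\textbf{Proof of the semi-diagonal identity.} Because $C$ is a cone, $(C,B)+t(C,B_1)=(C,B+tB_1)$, and this pair lies in $\mathcal{M}_C$ for every $t\ge 0$; hence $f(t):=\V(C,B+tB_1)$ is a polynomial in $t$, and by multilinearity its coefficient of $t^1$ is $n\,\Vol_C((C,B)^{[n-1]},(C,B_1))$. On the other hand, $B_1\subseteq C$ gives $(B+tB_1)(\gamma)=B(\gamma)+tB_1(\gamma)\ge B(\gamma)$ on every facet normal $\gamma$ of $B$, so $B+tB_1\subseteq B$ and $f(t)-f(0)=\Vol\big(B\setminus(B+tB_1)\big)$. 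Since $B_1(e_j)=0$ the coordinate facets of $B$ do not move; the facets of $B_1$ that are not facet normals of $B$, and the mutual overlaps near lower-dimensional faces of $B$, each contribute only $O(t^2)$; so for small $t$ the region $B\setminus(B+tB_1)$ is, up to $O(t^2)$, the disjoint union over the strictly positive primitive facet normals $\gamma$ of $B$ of the slabs $\{x\in B:\ B(\gamma)\le\gamma(x)\le B(\gamma)+tB_1(\gamma)\}$. Passing to lattice coordinates in which $\gamma=e_n^{\ast}$ shows such a slab has lattice volume $n\cdot tB_1(\gamma)\cdot\mathrm{vol}_{n-1}(B^\gamma)+O(t^2)$. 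Summing over $\gamma$ and differentiating at $t=0$ gives $f'(0)=n\sum_\gamma B_1(\gamma)\,\mathrm{vol}_{n-1}(B^\gamma)$; comparing with the coefficient computed above, and using symmetry of $\Vol_C$, yields the semi-diagonal identity, hence the theorem.

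\textbf{The main obstacle.} The delicate point is the error analysis just invoked: one must check that for small $t$ the combinatorial type of $B+tB_1$ near the bounded facets of $B$ is stable and that the ``corner'' and ``new facet'' contributions are genuinely $O(t^2)$. This is exactly the estimate behind the classical identity $\tfrac1n\frac{d}{dt}\big|_{0}\Vol(K+tL)=\sum_F h_L(u_F)\,\mathrm{vol}_{n-1}(F)$ for bounded polytopes, and in our situation it reduces to that case because the moving facets $B^\gamma$ are all bounded while the genuinely unbounded facets lie in coordinate hyperplanes and stay fixed. An alternative that avoids derivatives is to truncate $C$ and $B$ by a large simplex $\{x_1+\dots+x_n\le R\}$, apply the classical facet formula to the resulting bounded polytopes, and note that the contributions of the ``cap'' facet and of the coordinate facets cancel pairwise since $C$ and $B$ coincide there; I would nevertheless run the first-variation argument as the main line, as it produces the semi-diagonal identity directly.
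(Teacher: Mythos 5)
Your proof is correct in outline, but it takes a genuinely different route from the paper's: the paper offers no argument of its own for Theorem \ref{mixv}, simply quoting it from \cite{rel6} and remarking that it ``easily follows'' from the classical metric facet formula for bounded polytopes (Theorem 4.10 of \cite{thm2}) --- essentially the truncation argument you mention only as an alternative, in which one cuts $C$ and the $B_i$ by a large simplex, applies the bounded formula, and checks that the cap and coordinate-facet contributions cancel within each pair. Your main line is instead self-contained: polarization in $(B_2,\dots,B_n)$ reduces the identity to the semi-diagonal case, and the coefficient of $t$ in $\V(C,B+tB_1)$ is extracted by slab-counting over the bounded facets of $B$. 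The preliminary reductions you need all hold ($B_i\subseteq C$; $B_i(e_j)=0$ because $C\setminus B_i$ is bounded; every facet of $B_i$ is either bounded with strictly positive normal or lies in a coordinate hyperplane), and they are exactly what collapses the unbounded case to the classical one: for $t\in[0,1]$ one has $B+B_1\subseteq B+tB_1\subseteq B$, so $B\setminus(B+tB_1)$ sits inside the fixed bounded set $C\setminus(B+B_1)$, and your $O(t^2)$ error analysis is literally the one behind Minkowski's first-variation formula. Two small points deserve a line each in a full write-up: (i) that the geometrically defined function $t\mapsto\V(C,B+tB_1)$ for real $t\ge0$ agrees with the polynomial produced by multilinearity over the semigroup (integer $t$), which follows from homogeneity and continuity; and (ii) that polarization is legitimate here because $\mathcal{M}_C$ is a commutative semigroup with neutral element $(C,C)$ and both sides are symmetric and additive in each of $B_2,\dots,B_n$ --- for the right-hand side via $(Q+Q')^\gamma=Q^\gamma+(Q')^\gamma$ together with finiteness of the sum, a term vanishing unless $\gamma$ is a facet normal of $B_2+\dots+B_n$. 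What your route buys is a proof independent of the bounded-polytope theory; what the paper's (cited) route buys is brevity, since all the $O(t^2)$ bookkeeping is inherited wholesale from the classical statement.
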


This theorem easily follows from its well known metric version (see e.g. Theorem 4.10 in \cite{thm2}).  For our relative version, see Section 4 in \cite{rel6}.

\begin{exmp}
In order to calculate $\Vol(A,B)$ in Figure 1, observe that we only need to focus on $\gamma$ such that $\Vol^{(n-1)}(B^\gamma) \neq 0$. There are only two such covectors, ie $\gamma_1 = (2,1)$ and $\gamma_2 = (1,2)$, which correspond to edge  $\mathrm{conv}(\{(0,3),(1,1)\})$ and $\mathrm{conv}(\{(1,1),(3,0)\})$ of $B$, respectively. As such, we have $\Vol^{(n-1)}(B^{\gamma_1}) = \Vol^{(n-1)}(B^{\gamma_2}) = 1$. On the other hand, by the definition of support function, we have $A(\gamma_1) = A(\gamma_2) = 2$.

Correspondingly, we have $\Vol_C((C,A),(C,B)) = 1 \cdot 2 + 1 \cdot 2 = 4$, which is identical with our previous result. 
\end{exmp}

\section{Tuples of polyhedra with mixed volume 1}\label{Smv1}

We denote the positive orthant in $\R^n$ by $C$. The hyperplane passing through the points $(a_{1},0,\ldots,0),(0,a_{2},0,\ldots,0),...,(0,\ldots,0,a_{n})$, where $a_i > 0$ for all $i \in \{1\ldots,n\}$, splits it into a simplex and an unbounded polyhedron that we denote by $C_{a_{1},a_{2},...,a_{n}}$. In the rest of the paper, $e_m$ denotes the point $(0,\ldots,0,1,0,\ldots,0)$ where $1$ is in the $m^{th}$ place. Recall that unless otherwise stated, all unbounded polyhedra in this paper are assumed to have bounded difference with the positive orthant $C$.

\begin{theorem}\label{v1} Consider $n$ $n$-dimensional lattice polyhedra $B_1, \ldots, B_n$. Then, up to changing the order of polyhedra and coordinates, the following two statements are equivalent:

1) the mixed volume of the pairs $(C,B_1),\ldots,(C,B_n)$ is well defined and equal to 1.

2) For every $m$ from $1$ to $n$, $e_m \in B_m$.
\end{theorem}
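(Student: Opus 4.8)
The proof splits along the two implications, with Theorem \ref{int} doing most of the work for $(1)\Rightarrow(2)$ and a direct verification (again via Theorem \ref{int}, or the formula of Theorem \ref{mixv}) for $(2)\Rightarrow(1)$. Write $B=\mathrm{conv}(B_1\cup\cdots\cup B_n)$. The plan opens with a preliminary lemma: for the mixed volume to be well defined each pair $(C,B_i)$ must lie in $\mathcal{M}_\Gamma$ with $\Gamma$ the support cone of $C$, which forces each $B_i$ to have recession cone exactly $C$, and (for $n\ge 2$; the case $n=1$ is a one-line computation) to satisfy $B_i\subseteq C$, since a point of $B_i$ outside $C$ would drag an unbounded piece of $C\setminus B_i$ along it. Hence $B\subseteq C$ and $C\setminus B$ is a \emph{bounded} lattice polytope containing the origin, whose complement $B$ in $C$ is convex.

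For $(1)\Rightarrow(2)$: by Theorem \ref{int}, $\Vol(C\setminus B)\le \Vol_C((C,B_1),\dots,(C,B_n))=1$, and $C\setminus B\ne\varnothing$, since otherwise $B=C$ and then, evaluating the formula of Theorem \ref{mixv} with the first argument equal to $C$ (so that $C(\gamma)=0$ for every positive primitive $\gamma$), the mixed volume would be $0$. The geometric heart is a classification lemma: a bounded lattice polytope $P$ with $0\in P\subseteq C$, with $C\setminus P$ convex and with lattice volume $1$, must, after permuting coordinates, be the standard simplex $\mathrm{conv}(0,e_1,\dots,e_n)$. Granting this, $B=C_{1,\dots,1}$, the $B_i$ are interlaced (the equality case of Theorem \ref{int}), and every $B_i\subseteq C_{1,\dots,1}$. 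Interlacing applied to the slanted facet $\mathrm{conv}(e_1,\dots,e_n)$ of $B$ and to all faces contained in it shows each $B_i$ meets that facet, hence $B_i$ has a nonempty face $F_i$ minimizing the covector $(1,\dots,1)$ with $F_i\subseteq\mathrm{conv}(e_1,\dots,e_n)$; since the only lattice points there are $e_1,\dots,e_n$, each $B_i$ has some $e_m$ among its vertices, and conversely (interlacing applied to the vertices $e_m$ of $B$) each $e_m$ lies in some $B_i$, hence in $F_i$. The bipartite relation $\{(i,m):e_m\in B_i\}$ then satisfies Hall's condition, so it admits a system of distinct representatives; reindexing the $B_i$ accordingly gives $e_m\in B_m$ for every $m$.

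For $(2)\Rightarrow(1)$: assuming $e_m\in B_m$ for all $m$, convexity together with the recession cone $C$ gives $C_{1,\dots,1}=\mathrm{conv}(e_1,\dots,e_n)+C\subseteq B\subseteq C$, and the constraint that each $C\setminus B_i$ be bounded pins this down to $B=C_{1,\dots,1}$. Then $C\setminus B$ is the standard unit simplex, so it remains to check the $B_i$ are interlaced. A $k$-face of $C_{1,\dots,1}$ is either a $k$-simplex with vertex set $\{e_i\}_{i\notin J}$ of size $k+1$ (when the face lies on the slanted facet), which meets $B_i$ for each of those $i$ because $e_i\in B_i$; or an unbounded $k$-face on the coordinate hyperplanes $\{x_j=0\}_{j\in J'}$, which still contains all sufficiently far points of $C$ lying in those hyperplanes, hence meets every $B_i$ since $C\setminus B_i$ is bounded. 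Either way it meets at least $k+1$ of the $B_i$, so Theorem \ref{int} gives $\Vol_C((C,B_1),\dots,(C,B_n))=\Vol(C\setminus C_{1,\dots,1})=1$.

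The main obstacle is the classification lemma above — that the standard simplex is the only unit-volume lattice ``corner hole'' of $C$. I would prove it by induction on $n$: intersect $P$ with each coordinate hyperplane $\{x_j=0\}$, note that the slice is again such a hole (in the corresponding facet of $C$, of lattice volume at most $1$), and use unimodularity/Pick-type estimates to exclude any vertex of $P$ other than $0$ and the $e_m$, after which convexity of $C\setminus P$ forces the simplex. A secondary, more bookkeeping difficulty is making rigorous the reductions $B_i\subseteq C$ and $B=C_{1,\dots,1}$ and dealing with the degenerate configurations — where some $B_i$ strictly contains $e_m+C$, or meets several of the $e_j$ — so that the matching argument and the converse genuinely go through for \emph{all} tuples, not just the minimal one $B_m=\mathrm{conv}(e_m)+C$ (which, being excluded from $\mathcal{M}_\Gamma$, is never literally attained).
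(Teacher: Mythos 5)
Your overall architecture coincides with the paper's: reduce to $B=\mathrm{conv}(B_1\cup\dots\cup B_n)=C_{1,\dots,1}$ via Theorem \ref{int}, use interlacing on the faces of the slanted facet to get a bipartite matching condition, and extract a reordering with $e_m\in B_m$ (your appeal to Hall's theorem is the same combinatorial fact as the paper's Frobenius--K\"onig theorem, used in contrapositive). Your ``classification lemma'' (the unit-volume corner hole is the standard simplex) is treated more carefully than in the paper, which simply asserts it; note it follows quickly without induction from the fact that a full-dimensional lattice polytope of lattice volume $1$ is a unimodular simplex, whose nonzero vertices must generate the cone $C$ and form a lattice basis, hence be $e_1,\dots,e_n$ up to permutation.

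One step as written is false: in $(2)\Rightarrow(1)$ you claim that ``the constraint that each $C\setminus B_i$ be bounded pins this down to $B=C_{1,\dots,1}$.'' Boundedness of $C\setminus B_i$ does not rule out $C_{1,\dots,1}\subsetneq B\subseteq C$; in particular it does not exclude $B_i=C$ (and if some $B_i=C$ then condition (2) holds while the mixed volume is $0$ --- an edge case the paper's proof also ignores). The correct argument is: each $B_i$ is a lattice polyhedron contained in $C$ with $0\notin B_i$, so every vertex of $B_i$ is a nonzero lattice point of $C$ and hence satisfies $\sum_j x_j\ge 1$; by convexity $B_i\subseteq\{x:\sum_j x_j\ge 1\}\cap C=C_{1,\dots,1}$, whence $B=C_{1,\dots,1}$. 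With that repair (and the explicit exclusion $B_i\ne C$) the rest of your verification of interlacing for $C_{1,\dots,1}$ goes through. Finally, make sure your Hall verification genuinely invokes interlacing on \emph{every} face $\mathrm{conv}(\{e_m:m\in S\})$: the two consequences you state explicitly (each $B_i$ meets the slanted facet, each $e_m$ lies in some $B_i$) are not by themselves sufficient for a perfect matching.
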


We shall need the following combinatorial lemma in finishing the proof. 

\begin{lemma}
If the mixed volume of lattice pairs $(C,B_1),...,(C,B_n)$ in $\R^n$ is equal to 1, then every point of $\{e_1, ... , e_n\}$ belongs to at least one of $B_1, ... , B_n$. Moreover, all of the polyhedra $B_1, ... ,B_n$ contain at least one of those points. 
\end{lemma}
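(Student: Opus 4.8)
The plan is to extract information from the formula in Theorem \ref{mixv}, which expresses $\Vol_C((C,B_1),\ldots,(C,B_n)) = \sum_\gamma B_1(\gamma)\Vol^{(n-1)}(B_2^\gamma,\ldots,B_n^\gamma)$ as a sum of nonnegative terms (each $-B_i(\gamma)\ge 0$ since the $B_i$ have support cone $C$ and... wait, let me think about signs) — more precisely, one should first check that every summand is nonnegative, so that mixed volume $1$ forces all but essentially one of them to vanish and the surviving contributions to be tightly constrained. Let me reconsider.
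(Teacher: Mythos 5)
Your submission is not a proof but an abandoned sketch: it ends mid-sentence with ``let me reconsider'' and never establishes either assertion of the lemma. The one concrete idea present --- that the summands in the formula of Theorem \ref{mixv} are nonnegative, so a mixed volume of $1$ forces all but one term to vanish --- is true (each $B_i$ is contained in $C$, since otherwise $B_i\setminus C$ would be unbounded, so $B_i(\gamma)\ge 0$ for $\gamma$ with positive coordinates), but you never extract anything from it. In particular, the formula is asymmetric (it singles out $B_1$ via its support function and treats $B_2,\dots,B_n$ via faces), so concluding ``every $e_m$ lies in some $B_i$'' or ``every $B_i$ contains some $e_m$'' from the vanishing of terms would require a genuinely nontrivial further argument that is entirely absent.

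For comparison, the paper's proof does not use Theorem \ref{mixv} at all. For the first claim it argues: if $e_1$ lay in no $B_i$, then $B=\mathrm{conv}(B_1\cup\dots\cup B_n)$ would miss $e_1$; since no $B_i$ may contain the origin, $B$ has a vertex $(a,0,\dots,0)$ with $a\ge 2$, so $\Vol(C\setminus B)\ge 2$, and Theorem \ref{int} gives $\Vol_C((C,B_1),\dots,(C,B_n))\ge \Vol(C\setminus B)\ge 2$, a contradiction. For the second claim: if $B_1$ contained none of $e_1,\dots,e_n$ then $B_1\subseteq C_{2,\dots,2}=2\,C_{1,\dots,1}$, and monotonicity plus multilinearity give
\begin{align*}
\Vol_C((C,B_1),\dots,(C,B_n))\ \ge\ 2\,\Vol_C((C,C_{1,\dots,1}),(C,B_2),\dots,(C,B_n))\ \ge\ 2,
\end{align*}
again a contradiction. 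If you wish to salvage your route via Theorem \ref{mixv}, you would at minimum need to (i) justify the nonnegativity of each term, (ii) apply the formula once for each choice of distinguished polyhedron to restore symmetry, and (iii) explain how the vanishing of all but one term forces the $e_m$'s to be covered --- none of which is currently on the page.
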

\begin{proof}
First, without loss of generality, assume $e_1$ is not contained in any of the polyhedra in $B_1, ... , B_n$. Then, given $B = \mathrm{conv}(B_1, ... ,B_n)$, $e_1$ does not belong to $B$. Since the mixed volume of all pairs is $1$, none of $B_1,...,B_n$ can contain the origin, so $B$ must contain a vertex in the form of $(a,0,...,0)$ where $a \geq 2$. Thus, the lattice volume of $C\backslash B$ is at least $2$, and, by Theorem \ref{int}, the mixed volume of $(C,B_1),...,(C,B_n)$ is at least $2$, which contradicts the hypothesis. 

Now, without loss of generality, assume $B_1$ does not contain any of $\{e_1, ... , e_n\}$. Thus, $B_1 \subset C_{2,...,2}$. $\Vol_C((C,B_1),...,(C,B_n)) \geq \Vol_C((C,C_{2,...,2}),...,(C,B_n))$. By the linearity of mixed volume, $\Vol_C((C,C_{2,...,2}),...,(C,B_n)) = 2\Vol_C((C,C_{1,...,1}),...,(C,B_n)) \geq 2$. As such, we have $\Vol_C((C,B_1),...,(C,B_n)) \geq 2$, which contradicts the original claim.
\end{proof}

With the lemma and Frobenius-König Theorem, we are ready to prove Theorem \ref{v1}.

\begin{theorem} (Frobenius-König Theorem)\cite{frob}\label{Konig}
The permanent of an $n \times n$ integer matrix with all entries either $0$ or $1$ is $0$ if and only if the matrix contains an $r \times s$ submatrix of $0$s with $r+s=n+1$.
\end{theorem}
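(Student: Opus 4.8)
The plan is to prove both directions by reinterpreting the permanent combinatorially. Writing $A=(a_{ij})$, we have $\mathrm{per}(A)=\sum_{\sigma\in S_n}\prod_{i=1}^n a_{i\sigma(i)}$; since each entry is $0$ or $1$, every summand is $0$ or $1$, and a summand equals $1$ precisely when $a_{i\sigma(i)}=1$ for all $i$. Thus the permanent counts the permutations $\sigma$ with all $a_{i\sigma(i)}=1$, and it vanishes if and only if no such $\sigma$ exists. Equivalently, forming the bipartite graph $G$ whose two parts are the rows and the columns, with an edge joining row $i$ to column $j$ exactly when $a_{ij}=1$, the permanent is $0$ if and only if $G$ has no perfect matching. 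I would set up this dictionary first, as it converts the statement into a purely combinatorial claim about matchings.

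For the easy direction (an all-zeros block forces the permanent to be $0$), suppose $A$ has an $r\times s$ submatrix of zeros occupying a set $R$ of $r$ rows and a set $T$ of $s$ columns, with $r+s=n+1$. In any permutation $\sigma$ contributing to the permanent, the $r$ rows of $R$ are sent to $r$ distinct columns, and none of these columns can lie in $T$, since the corresponding entries are $0$. But the columns outside $T$ number only $n-s=r-1$, so $r$ distinct such columns cannot be found. By pigeonhole no contributing $\sigma$ exists, and the permanent is $0$.

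The hard direction is the crux, and I would deduce it from Hall's marriage theorem (equivalently, from K\"onig's minimum-cover/maximum-matching theorem). If the permanent is $0$, then $G$ has no perfect matching, so by the contrapositive of Hall's theorem there is a set $S$ of rows whose neighborhood $N(S)$ --- the set of columns containing a $1$ in some row of $S$ --- satisfies $|N(S)|<|S|$. Put $r=|S|$ and let $T$ be the set of columns outside $N(S)$, so that $s:=|T|=n-|N(S)|$. Every $1$ in a row of $S$ lies in a column of $N(S)$, hence the $r\times s$ submatrix on rows $S$ and columns $T$ is entirely zero. Here $r+s=|S|+n-|N(S)|\ge n+1$, because $|S|-|N(S)|\ge 1$. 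Finally, an all-zeros block whose dimensions sum to at least $n+1$ can be trimmed to one whose dimensions sum to exactly $n+1$: discarding rows or columns preserves the all-zeros property, and since $1\le r,s\le n$ with $r+s\ge n+1$, one may pick $r'\le r$ and $s'\le s$ with $r'+s'=n+1$. This produces the required submatrix.

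The main obstacle is precisely the appeal to Hall's theorem in the hard direction; the easy direction and the trimming step are elementary bookkeeping. If a self-contained argument is wanted, I would prove Hall's theorem by induction on the number of rows: if every nonempty proper set of rows $S$ satisfies $|N(S)|\ge|S|+1$, then match one row to any of its neighbors and apply the inductive hypothesis to the remaining $(n-1)\times(n-1)$ subgraph, which still satisfies Hall's condition; otherwise some nonempty proper $S$ is tight with $|N(S)|=|S|$, in which case one applies the inductive hypothesis separately to the subgraph on rows $S$ and columns $N(S)$ and to the subgraph on the complementary rows and columns (Hall's condition transfers to both pieces by a short counting argument), and then glues the two matchings. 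Only the verification of Hall's condition for the two pieces in the tight case requires genuine care; the rest is routine.
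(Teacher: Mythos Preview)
Your argument is correct: the bipartite reformulation, the pigeonhole for the easy direction, the appeal to Hall's theorem for the hard direction, and the trimming step are all sound. One small point worth making explicit in the trimming step is that both $r\ge 1$ and $s\ge 1$ hold (the latter because $|N(S)|<|S|\le n$ forces $|N(S)|\le n-1$), so that one can indeed choose $r'\le r$, $s'\le s$ with $r'+s'=n+1$ and $r',s'\ge 1$; you gesture at this but it is worth a sentence.

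As for comparison with the paper: there is nothing to compare. The paper does not prove the Frobenius--K\"onig theorem; it merely states it as a classical result and cites Minc's \emph{Permanents} for a reference, then invokes it as a black box in the proofs of Theorems~\ref{v1} and~\ref{v2}. Your write-up therefore supplies strictly more than the paper does on this point. The approach you take --- reducing to Hall's marriage theorem --- is the standard one and is essentially what one finds in the cited reference.
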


\begin{proof}[proof of Theorem \ref{v1}]
First, assume the mixed volume of the pairs $(C,B_1),\ldots,(C,B_n)$ equals 1. For the same reason stated in the proof of Lemma 1, none of $B_1, ... , B_n$ could contain the origin. Thus, given that $B = \mathrm{conv}(B_1 \cup ... \cup B_n)$, we have $\Vol(C\backslash B) \geq 1$. Since mixed volume of the pairs is 1, we have $\Vol(C\backslash B) = 1$, and B is $C \backslash W$, where $W$ is the convex hull of all points in $\{e_1, ... , e_n\}$ and the origin. Since $W$ is a simplex, convex hull of all subsets of $\{e_1, \ldots , e_n\}$ are faces of $W$, and thus, faces of $B$. By Theorem \ref{int}, $B_1, \ldots , B_n$ must be interlaced. For the sake of contradiction, assume there is no order of pairs of polytopes that could satisfy the relationship stated in Theorem \ref{v1}. Consider a matrix with $n \times n$ with $1$ on the $i,j-$th entry if $e_i \in B_j$ and $0$ otherwise. By Frobenius-König Theorem, there must exist a $r \times s$ submatrix of $0$ with $r+s = n+1$.  As such, $s$ of $B_1, \ldots , B_n$, say $B_1, \ldots , B_{s}$, do not contain $n-s+1$ elements of $\{e_1, \ldots , e_n\}$, say $e_1, \ldots , e_{n-s+1}$. Since $B_1, \ldots , B_s$ do not contain any point in $e_{1}, \ldots , e_{n-s+1}$, by convexity, they do not intersect the $n-s$ dimensional face $\{e_{1}, \ldots , e_{n-s+1}\}$. Correspondingly, the $n-s$ dimensional face $\{e_{1}, \ldots , e_{n-s+1}\}$ intersects at most $n-s$ polytopes in $B_1, \ldots , B_n$, namely, $B_{s+1}, \ldots , B_{n}$. Thus, by definition, $B_1, \ldots , B_n$ are not interlaced. Thus, by Theorem \ref{int}, mixed volume of $(C,B_1), \ldots ,(C,B_n)$ is greater than $1$. 

If $e_m \in B_m$ for every $m$ from $1$ to $n$, then $C \backslash B$ is the standard simplex with volume 1. Moreover, for every $k \in [1,n]$, every $k$-dimensional face of B intersects $k$ polyhedra from $B_1, ... , B_n$, one on each vertex. As such, $B_1, ... , B_n$ are interlaced, and $\Vol_C((C,B_1),...,(C,B_n)) = 1$.
\end{proof}

\section{Tuples of polyhedra with mixed volume 2}\label{Smv2}

\begin{defn}
A tuple of $n$-dimensional polyhedra $B_1, \ldots , B_n$ is said to be \textit{minimal} with mixed volume $V$, if $\Vol_C((C,B_1),...,(C,B_n)) = V$, and for every $i$ from $1$ to $n$ and point $\delta \in C\backslash B_i$, $\Vol_C((C,B_1),...,(C,\mathrm{conv}(B_i \cup \delta)),...,(C,B_n)) < V$.
\end{defn}

 Recall that unless otherwise stated, all unbounded polyhedra in this paper are assumed to have bounded difference with the positive orthant $C$.
 
\begin{theorem}\label{v2}
Consider $n$ $n$-dimensional lattice polyhedra $B_1, \ldots , B_n$. If the tuple $B_1, \ldots , B_n$ is minimal by inclusion with mixed volume 2, then up to changing the order of polyhedra and coordinates, $B_1, \ldots , B_n$ is in the form of $k$ copies of 
$C_{1,\ldots,1,2,\ldots,2}$ ($(k-1)$ many 1's) and $n-k$ copies of $C_{1,\ldots,1}$ for $k \in \{1, \ldots , n\}$.
\end{theorem}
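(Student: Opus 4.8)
The plan is to reduce everything to the two theorems we already have, namely Theorem~\ref{int} (the interlacing criterion) and Theorem~\ref{mixv} (the support-function formula). The starting point is the observation that, just as in the proof of Theorem~\ref{v1}, none of $B_1,\ldots,B_n$ can contain the origin (otherwise one pair contributes a factor that is too large), so $B=\mathrm{conv}(B_1\cup\cdots\cup B_n)$ is of the form $C\setminus W$ for a lattice polytope $W$ containing the origin, and $\Vol(C\setminus B)=\Vol(W)\ge 1$. Combining $\Vol_C((C,B_1),\ldots,(C,B_n))=2$ with Theorem~\ref{int} gives only two possibilities: either $B_1,\ldots,B_n$ are interlaced and $\Vol(C\setminus B)=2$, or they fail to be interlaced but still have mixed volume $2$. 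I would first argue that the second case is impossible for a \emph{minimal} tuple: if the polyhedra are not interlaced, Theorem~\ref{int} gives $2 = \Vol_C > \Vol(C\setminus B)\ge 1$, so $\Vol(C\setminus B)=1$ and $B = C\setminus W_0$ with $W_0$ the standard simplex $\mathrm{conv}(0,e_1,\ldots,e_n)$; but then a Frobenius--König argument exactly as in Theorem~\ref{v1} shrinks the tuple to one of mixed volume $1$ while staying inside the original $B_i$'s — contradicting minimality, since minimality forbids \emph{any} point-deletion from lowering the volume, hence in particular forbids the whole tuple from being "above" a mixed-volume-$1$ tuple. (I would need to phrase this carefully: minimality is about deleting vertices of the $B_i$, so the argument is that a non-interlaced minimal tuple of volume $2$ would have to have each $B_i$ already equal to its interlaced "core," forcing volume~$1$.)

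So from here on $B_1,\ldots,B_n$ are interlaced with $\Vol(W)=2$, where $W=C\setminus B$ is a lattice polytope with $0\in W$ and $W\subseteq C$. The next step is to classify such $W$: a lattice simplex-like body of normalized volume $2$ in the orthant containing the origin. Since $\Vol(W)=2$ and $W$ has the origin plus, by the interlacing condition applied to the $1$-dimensional faces of $B$ (the coordinate rays), a lattice point of the form $a_i e_i$ on each axis with $a_i\ge 1$, the simplex $\mathrm{conv}(0,a_1e_1,\ldots,a_ne_n)$ has volume $\prod a_i$, which must be $\le 2$ (it is contained in $W$... or contains relevant structure — I would pin down the exact inclusion). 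This forces $\prod a_i \le 2$, i.e. exactly one $a_i$ equals $2$ and the rest equal $1$, OR all $a_i=1$ and $W$ has an extra vertex. I would then show, using Theorem~\ref{mixv}, that $W$ is in fact exactly this simplex $\mathrm{conv}(0,2e_k,e_1,\ldots,\widehat{e_k},\ldots,e_n)$, i.e. $B = C_{1,\ldots,1,2,1,\ldots,1}$ with the $2$ in position $k$ — any extra vertex of $W$ or any cut corner would either raise the volume above $2$ or violate minimality.

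The final and most delicate step is to pin down the individual $B_i$ from the knowledge of $B$ and the interlacing/minimality conditions, and to show they must be exactly $k$ copies of $C_{1,\ldots,1,2,\ldots,2}$ (with $k-1$ ones) and $n-k$ copies of $C_{1,\ldots,1}=C\setminus\mathrm{conv}(0,e_1,\ldots,e_n)$. The idea: interlacing forces each face of $B$ to meet enough $B_i$'s, in particular the vertex $2e_n$ (say) must lie in at least one $B_i$; minimality forces each $B_i$ to be as small as possible subject to the collective volume being $2$ and to interlacing — concretely each $B_i$ should be "$C_{1,\ldots,1}$ except possibly truncated further toward the origin along the special direction." A careful bookkeeping via Theorem~\ref{mixv}, writing $\Vol_C((C,B_1),\ldots,(C,B_n))=\sum_\gamma B_1(\gamma)\Vol^{(n-1)}(B_2^\gamma,\ldots,B_n^\gamma)$ and noting only the covector $\gamma=(1,\ldots,1,2,\ldots)$-type facets of $B$ contribute, should show the contribution is $2$ precisely when exactly $k$ of the $B_i$ "reach" the far facet and the remaining $n-k$ sit at the standard simplex; any other configuration either gives volume $\ne 2$ or admits a volume-lowering deletion. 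I expect this last matching argument — translating "minimal and interlaced with $\Vol(W)=2$" into the precise list of $B_i$ — to be the main obstacle, because it requires simultaneously controlling all $n$ polyhedra rather than just their convex hull, and the symmetric multilinearity of $\Vol_C$ has to be used in both directions (to see a given configuration has volume $2$, and to see nothing smaller does).
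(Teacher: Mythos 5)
There is a genuine gap, and it occurs at the very first step of your plan: the claim that a minimal tuple of mixed volume $2$ must be interlaced is false. Take $n=2$ and the tuple $B_1=C_{2,2}$, $B_2=C_{1,1}$ (this is the $k=1$ member of the family in the statement). Here $C_{2,2}\subseteq C_{1,1}$, so $B=\mathrm{conv}(B_1\cup B_2)=C_{1,1}$ and $\Vol(C\setminus B)=1<2$; by Theorem~\ref{int} the tuple is therefore \emph{not} interlaced. Yet $\Vol_C((C,B_1),(C,B_2))=2$ (in Theorem~\ref{mixv} the only contributing covector is $\gamma=(1,1)$, with $B_1(\gamma)=2$ and $\Vol^{(1)}(B_2^\gamma)=1$), and the tuple is minimal, since adjoining $e_1$, $e_2$ or $0$ to $B_1$, or $0$ to $B_2$, drops the volume to $1$ or $0$. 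The same happens for every $k<n$: those tuples have $\mathrm{conv}(B_1\cup\dots\cup B_n)=C_{1,\dots,1}$ and $\Vol(C\setminus B)=1$. Your argument for excluding the non-interlaced case also runs monotonicity backwards: replacing the $B_i$ by subsets \emph{increases} $\Vol_C$, so no ``shrinking while staying inside the original $B_i$'s'' can produce a tuple of mixed volume $1$, and minimality (which only forbids \emph{enlarging} a $B_i$ without dropping the volume) yields no contradiction there. Since all your subsequent steps assume $\Vol(C\setminus B)=2$ and proceed to classify $W=C\setminus B$ of volume $2$, the proposal at best recovers the single family $k=n$ and misses the other $n-1$ families --- precisely the non-interlaced ones. (The final ``matching'' step is also only a sketch, as you acknowledge.)

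For comparison, the paper never tries to determine $\mathrm{conv}(B_1\cup\dots\cup B_n)$. It first verifies directly that each listed tuple has mixed volume $2$ and is minimal; interlacing is applied not to the tuple itself but to the tuple obtained by doubling the $n-k$ copies of $C_{1,\dots,1}$, after which multilinearity divides out the factor $2^{n-k}$. It then shows that \emph{every} tuple of mixed volume $2$ is contained componentwise in one of the listed tuples, by applying the Frobenius--K\"onig theorem to the $0/1$ matrix recording whether $e_i\in B_j$: since the volume exceeds $1$, the matching of Theorem~\ref{v1} must fail, so some $m$ of the polyhedra jointly miss $n-m+1$ of the points $e_i$, which places each of them inside $C_{1,\dots,1,2,\dots,2}$ and the rest inside $C_{1,\dots,1}$; minimality then forces equality. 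If you want to salvage your approach, the non-interlaced case must be handled by such a combinatorial covering argument rather than excluded.
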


We will first prove those tuples of polyhedra are minimal polyhedra with mixed volume 2:
\begin{lemma}
The tuples of polyhedra described in Theorem \ref{v2} are minimal polyhedra with mixed volume 2.
\end{lemma}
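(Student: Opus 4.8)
The plan is to verify the two defining conditions of minimality directly for the proposed tuple: first, that its mixed volume equals $2$, and second, that augmenting any single polyhedron by any lattice point outside it strictly drops the mixed volume. Fix $k \in \{1,\ldots,n\}$ and let $B_1 = \cdots = B_k = C_{1,\ldots,1,2,\ldots,2}$ (with $k-1$ leading $1$'s, so the unit coordinates are in positions $1,\ldots,k-1$ and the $2$'s in positions $k,\ldots,n$) and $B_{k+1} = \cdots = B_n = C_{1,\ldots,1} = C \setminus \Delta$, where $\Delta$ is the standard simplex.

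For the volume computation I would first observe that $B = \mathrm{conv}(B_1 \cup \cdots \cup B_n)$ is just $B_1 = C_{1,\ldots,1,2,\ldots,2}$ itself, since each $C_{1,\ldots,1}$ is contained in $C_{1,\ldots,1,2,\ldots,2}$. Then I would check that $B_1,\ldots,B_n$ are interlaced in the sense of Definition \ref{def1}: the faces of $B$ are the faces of the truncating simplex $W = \mathrm{conv}\{0, e_1, \ldots, e_{k-1}, 2e_k, \ldots, 2e_n\}$ that meet $C$, and for a $j$-dimensional such face one checks it intersects at least $j+1$ of the $B_i$ — the key point being that the vertices $e_1,\ldots,e_{k-1}$ lie in all $n$ polyhedra, the vertex $2e_m$ ($m \geq k$) lies in $B_1,\ldots,B_k$, and the origin-adjacent structure is covered by the $C_{1,\ldots,1}$'s. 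Given interlacedness, Theorem \ref{int} gives $\Vol_C((C,B_1),\ldots,(C,B_n)) = \Vol(C \setminus B) = \Vol(W)$, and a direct lattice-volume calculation of the simplex $W$ (which is a "stretched" standard simplex, scaled by $2$ in the last $n-k+1$ coordinates relative to a unit simplex, but sharing the truncation hyperplane structure) yields $2$; alternatively one computes $\Vol_C$ by multilinearity, writing $C_{1,\ldots,1,2,\ldots,2}$ in terms of $C_{1,\ldots,1}$ and using Theorem \ref{mixv}.

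For minimality, I would take any $i$ and any lattice point $\delta \in C \setminus B_i$ and show the mixed volume with $B_i$ replaced by $\mathrm{conv}(B_i \cup \delta)$ becomes strictly less than $2$, hence (being a nonnegative integer, and positive since none of the enlarged polyhedra contains the origin) equal to $1$. By Theorem \ref{v1} it suffices to check that after the enlargement there is an ordering and coordinate choice placing $e_m$ in the $m$-th polyhedron. The point is that $C \setminus B_i$ is a bounded region — for $B_i = C_{1,\ldots,1}$ it is the open standard simplex, whose only lattice points are... none in the interior, so actually $\delta$ must be a vertex or boundary lattice point; for $B_i = C_{1,\ldots,1,2,\ldots,2}$ the removed simplex $W$ has few lattice points — and adding any such $\delta$ forces $\mathrm{conv}(B_i \cup \delta)$ to contain enough of the $e_m$'s that a valid matching appears. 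I would enumerate the (few) lattice points $\delta$ can be and in each case exhibit the matching, using the Frobenius–König criterion of Theorem \ref{Konig} to organize the case analysis.

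The main obstacle I anticipate is the minimality direction, specifically handling the enlargement of $B_i = C_{1,\ldots,1,2,\ldots,2}$: the removed simplex $W$ contains the lattice points $e_k, \ldots, e_n$ (the midpoints-type vertices of edges of $W$ are $e_m$ for $m \geq k$), and one must check that adding $e_m$ to one copy of $C_{1,\ldots,1,2,\ldots,2}$ — which then contains $e_1,\ldots,e_{k-1}, e_m$ — still leaves a valid matching for the *whole* tuple under Theorem \ref{v1}, i.e. that the bipartite "containment graph" between $\{e_1,\ldots,e_n\}$ and the polyhedra has a perfect matching. After adding $e_m$ the set of polyhedra containing some $e_j$ in positions $k,\ldots,n$ has grown, and a Hall-type count should confirm the matching exists; the delicate part is being careful that $\delta$ ranging over *all* lattice points of $C \setminus B_i$ (not just vertices of $W$) is covered, which requires knowing exactly which lattice points $W$ contains.
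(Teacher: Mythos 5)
There is a genuine gap in your volume computation: it rests on a reversed inclusion. By the paper's convention, $C_{a_1,\ldots,a_n}$ is the unbounded piece of the orthant lying beyond the hyperplane through the points $a_ie_i$, so enlarging the $a_i$ \emph{shrinks} the polyhedron; thus $C_{1,\ldots,1,2,\ldots,2}\subset C_{1,\ldots,1}$, not the other way around. Consequently, for $k<n$ the hull $B=\mathrm{conv}(B_1\cup\cdots\cup B_n)$ equals $C_{1,\ldots,1}$ with $\Vol(C\setminus B)=1$, and the tuple is \emph{not} interlaced (already for $n=2$, $k=1$: the edge $\mathrm{conv}\{e_1,e_2\}$ of $B=C_{1,1}$ is disjoint from $C_{2,2}$, so this $1$-face meets only one polyhedron). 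Theorem \ref{int} then gives only the strict lower bound $\Vol_C>1$, not the value $2$. Your claimed evaluation $\Vol(W)=2$ is also incorrect in general: the simplex $W=\mathrm{conv}\{0,e_1,\ldots,e_{k-1},2e_k,\ldots,2e_n\}$ has lattice volume $2^{n-k+1}$, which equals $2$ only when $k=n$ (the one case where your interlacing argument does go through). The paper circumvents exactly this obstruction by first doubling the $n-k$ copies of $C_{1,\ldots,1}$ to $C_{2,\ldots,2}$; the modified tuple \emph{is} interlaced with hull $C_{1,\ldots,1,2,\ldots,2}$, Theorem \ref{int} gives mixed volume $2^{n-k+1}$, and multilinearity divides out the factor $2^{n-k}$ to leave $2$. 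Your fallback remark about ``computing by multilinearity using Theorem \ref{mixv}'' is too vague to substitute for this step.

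The minimality half of your argument is essentially the paper's (which is itself terse) and is sound in outline: the only lattice points of $C\setminus B_i$ are $0$ when $B_i=C_{1,\ldots,1}$, and $0,e_k,\ldots,e_n$ when $B_i=C_{1,\ldots,1,2,\ldots,2}$; adjoining $0$ makes the mixed volume $0$, while adjoining some $e_j$ with $j\geq k$ to one big copy yields a tuple satisfying condition (2) of Theorem \ref{v1} (match the enlarged polyhedron with $e_j$, the remaining $k-1$ big copies with $e_1,\ldots,e_{k-1}$, and the copies of $C_{1,\ldots,1}$ with the leftover unit vectors), so the mixed volume drops to $1<2$.
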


\begin{proof}
First, recognize that the polyhedra described above are minimal polyhedra. Each of those tuples has finitely many possibilities for the choice of an additional vertex $\delta$. For every choice of $\delta$, the resulting tuple has mixed volume $0$ or $1$ by Theorem \ref{v1}.

Second, we realize that the tuples of polyhedra described in Theorem \ref{v2} satisfy $\Vol_C((C,B_1),...,(C,B_n)) = V$. Consider the $m^{\text{th}}$ case, where $m \in [1,n]$, the polyhedra are shown below. 

$B_1, \ldots , B_m = C_{1, \ldots ,1,2, \ldots ,2}, B_{m+1}, \ldots , B_n = C_{1, \ldots ,1}$, where in $C_{1,\ldots,1,2,\ldots,2}$, the first $m-1$ terms are 1, and the following $n-m+1$ terms are $2$. If the coordinates of $B_{m+1}, ... , B_n$ are doubled to $C_{2, ... ,2}$, denoted as $B'_{m+1}, ... , B'_n$, claim $B_1, ... , B_m, B'_{m+1}, ... , B'_n$ are interlaced. 

Realize that $\mathrm{conv}(B_1, ... , B_m, B'_{m+1}, ... , B'_n) = C_{1,...,1,2,...,2}$, where the first $m - 1$ terms are $1$, and last $n-m+1$ terms are 2. For every $k$-dimensional bounded face $f$ with $k$ vertices from $\{e_1, ... , e_{m-1}, 2e_m, ... ,2e_n\}$, we will show that $f$ intersects at least $k+1$ polyhedra of $B_1, ... , B_m, B'_{m+1}, ... , B'_n$. For $v \in \{e_1, ... , e_{m-1}\}$, it intersects all $m$ polyhedra of $B_1, ... , B_m$. For $v' \in \{2e_m, ... , 2e_n\}$, it intersects all $n$ polyhedra. As such, if $\text{dim}(f) \leq m-1$, $f$ intersects at least $m$ polyhedra. If $\text{dim}(f) \geq m$, $f$ intersects all $n$ polyhedra. Correspondingly, $B_1, ... , B_m, B'_{m+1}, ... , B'_n$ are interlaced. By Theorem \ref{int}, the corresponding mixed volume is $\Vol(C \backslash \mathrm{conv}(B_1, ... , B_m, B'_{m+1}, ... , B'_n)) = 2^{n-m+1}$.

By linearity of mixed volume, 
\begin{align*}
    2^{n-m}&\Vol_C((C,B_1),\ldots,(C,B_n)) \\
    &= \Vol_C((C,B_1), \ldots ,(C,B_m),(C,2B_{m+1}), \ldots ,(C,2B_{n}))\\
    &= \Vol_C((C,B_1), \ldots ,(C,B_m),(C,B'_{m+1}), \ldots ,(C,B'_{n})) \\
    &= 2^{n-m+1}
\end{align*}
Correspondingly, $\Vol_C((C,B_1),\ldots,(C,B_n))=2$.
\end{proof}

\begin{proof} [proof of Theorem \ref{v2}]
For similar reason as in Lemma 1, none of $B_1,...,B_n$ can contain the origin. Assume that up to change of order of polyhedra and coordinates, there is a tuple of polyhedra $B'_1, ... , B'_n$ with $\Vol_C((C,B'_1),\ldots,(C,B'_n)) = 2$ that is not contained in the minimal tuple in described in Theorem \ref{v2} where $k = n$, ie, for every $e_m \in \{ e_1, ... ,e_n \}$, $e_m$ belongs to at least one of $B'_1, ... , B'_n$. On the other hand, since the mixed volume is greater than $1$, $B'_1, ... , B'_n$ can not satisfy the constraints offered in Theorem \ref{v1}. Thus, by Frobenius-König Theorem , upon changing the order of the polyhedra, there must exist a $m \in [1,n]$, such that $B'_1, ... ,B'_m$ cover less than $m$ elements in $\{ e_1, ... ,e_n \}$. Correspondingly, one of the following statements must be true.

1) $B'_1$ contains none of $\{ e_1, ... ,e_n \}$;

2) $B'_1 \cup B'_2$ contains at most 1 element of $\{ e_1, ... ,e_n \}$;

3) $B'_1 \cup B'_2 \cup B'_3$ contains at most 2 elements of $\{ e_1, ... ,e_n \}$;

...

n-1) $B'_1 \cup ... \cup B'_{n-1}$ contains at most $n-2$ elements of $\{ e_1, ... ,e_n \}$;

Clearly, up to change of order of polyhedra and coordinates, each tuple described above with index $i$ is contained in the minimal tuple described in the theorem with $k = i$. Thus, the $n$ tuples describe in Theorem \ref{v2} cover all possible combinations of minimal tuples of polyhedra with mixed volume 2. 
\end{proof}

\section{Finite number of polyhedra with finite mixed volume} \label{final}
\begin{theorem}\label{finv}
If $B_1,...,B_n$ are $n$-dimensional minimal polyhedra with mixed volume $V$, then $C_{V,...,V} \subseteq B_1,...,B_n$.
\end{theorem}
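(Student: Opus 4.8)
The plan is to use Theorem \ref{mixv} to turn the geometric minimality condition into an arithmetic one on the support numbers $B_i(\gamma)$, and then to check directly that adjoining any point of $C_{V,\ldots,V}$ to any $B_i$ leaves the mixed volume equal to $V$; by minimality this will force $C_{V,\ldots,V}\subseteq B_i$ for every $i$.

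I would begin with two preliminary observations. First, each $B_i$ satisfies $B_i\subseteq C$: its support cone is the positive orthant, so its recession cone is $C$, hence $q+C\subseteq B_i$ for every $q\in B_i$; if some $q\in B_i$ had a negative coordinate this would place an unbounded ray inside $B_i\setminus C$, contradicting the boundedness of $B_i\Delta C$ (for $n=1$ this follows instead from $V\ge 1$; the case $V=0$ is trivial). In particular $B_i(\gamma)\ge 0$ for every covector $\gamma$ with nonnegative coordinates. Second, by symmetry of the mixed volume, Theorem \ref{mixv} applied with $B_i$ in first place reads
$$V=\Vol_C((C,B_1),\ldots,(C,B_n))=\sum_\gamma B_i(\gamma)\,w_\gamma,\qquad w_\gamma:=\Vol^{(n-1)}\big((B_j^\gamma)_{j\neq i}\big),$$
a finite sum over primitive covectors $\gamma$ with (strictly) positive coordinates. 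Each $w_\gamma$ is a nonnegative integer: the faces $B_j^\gamma$ are lattice polytopes in the rational hyperplanes $\{\gamma=B_j(\gamma)\}$ with $B_j(\gamma)\in\Z$, and translating them by suitable lattice vectors into $\ker\gamma$ (possible since $\gamma$ is primitive) realizes $w_\gamma$ as a mixed volume of lattice polytopes in the rational hyperplane $\ker\gamma$, which is integral by the remarks after Definition \ref{mixed_top}. Let $S_i:=\{\gamma:w_\gamma\ge 1\}$, a finite set. Since $B_i(\gamma)\ge 0$, every summand $B_i(\gamma)w_\gamma$ is nonnegative, so for each $\gamma\in S_i$ we get $B_i(\gamma)\le B_i(\gamma)w_\gamma\le V$.

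Now fix $i$ and take any $\delta\in C_{V,\ldots,V}$, that is, $\delta\in C$ with $\delta_1+\cdots+\delta_n\ge V$. Set $B_i':=\mathrm{conv}(B_i\cup\{\delta\})$. Since $\delta\in C$, the pair $(C,B_i')$ again lies in $\mathcal{M}_C$, and $B_i'(\gamma)=\min(B_i(\gamma),\gamma(\delta))$, while the weights $w_\gamma$ — depending only on the $B_j$ with $j\neq i$ — are unchanged. Hence the mixed volume of the enlarged tuple is $\sum_{\gamma\in S_i}\min(B_i(\gamma),\gamma(\delta))\,w_\gamma\le V$, with equality precisely when $\gamma(\delta)\ge B_i(\gamma)$ for every $\gamma\in S_i$. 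For $\gamma\in S_i$ all coordinates of $\gamma$ are $\ge 1$, so
$$\gamma(\delta)=\sum_l \gamma_l\delta_l \ge \sum_l \delta_l \ge V \ge B_i(\gamma).$$
Thus the enlarged tuple still has mixed volume $V$, which by minimality is impossible unless $\delta\in B_i$. Since $\delta\in C_{V,\ldots,V}$ and $i$ were arbitrary, $C_{V,\ldots,V}\subseteq B_i$ for all $i$.

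The main obstacle is the middle step: making precise that enlarging a single $B_i$ changes the mixed volume only through the finitely many support numbers $B_i(\gamma)$, $\gamma\in S_i$, and only downward, with weights $w_\gamma$ that are bona fide positive integers. This is exactly where one needs Theorem \ref{mixv}, the inclusion–exclusion integrality of lattice mixed volumes transported into the rational hyperplane $\ker\gamma$, and the reduction $B_i\subseteq C$; granted these, the bound $B_i(\gamma)\le V$ on $S_i$ and the one-line estimate $\gamma(\delta)\ge\sum_l\delta_l\ge V$ close the argument. A minor point to treat separately is the degenerate case $n=1$ (and, more generally, directions $\gamma$ with $w_\gamma=0$, which drop out of the sum).
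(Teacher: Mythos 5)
Your proof is correct and follows essentially the same route as the paper's: both decompose the mixed volume via Theorem \ref{mixv}, deduce $B_i(\gamma)\le V$ for every covector $\gamma$ whose weight $\Vol^{(n-1)}\big((B_j^\gamma)_{j\neq i}\big)$ is nonzero, and observe that adjoining a point of $C_{V,\ldots,V}$ cannot lower any of these support numbers, so minimality forces the inclusion. The only differences are cosmetic: the paper adjoins the vertices $V\cdot e_m$ one at a time (its Lemmas 3 and 4) and then passes to $\mathrm{conv}(C_{V,\ldots,V}\cup B_1)$, whereas you adjoin an arbitrary $\delta\in C_{V,\ldots,V}$ in one step, and you spell out the preliminary facts ($B_i\subseteq C$, hence $B_i(\gamma)\ge 0$, and integrality of the weights) that the paper uses implicitly.
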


 Again, recall that we assume $ B_1,...,B_n$ above have bounded difference with $C$. Moreover, notice that this theorem is tight, ie, $C_{V,...,V,V-1}$ is not contained in all tuples of polyhedra with mixed volume $V$.
\begin{exmp}
$B_1,...,B_n$ with $B_1 = C_{V,...,V}$ and $B_2,...,B_n = C_{1,...,1}$ satisfy $\Vol_C((C,B_1),...,(C,B_n)) = V$. However, $C_{V,...,V,V-1} \not\subset B_1$.
\end{exmp}

In order to prove Theorem \ref{finv}, we will first prove the two following lemmas:

\begin{lemma}
If $\Vol_C((C,B_1),..., (C,B_n)) = V$, and $B'_1$ is the convex hull of $B_1$ and the point $c = V \cdot e_1$, then $\Vol_C((C,B'_1),(C,B_2),...,(C,B_n)) = V$ as well.
\end{lemma}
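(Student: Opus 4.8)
The plan is to use the formula for mixed volume from Theorem \ref{mixv}, which expresses $\Vol_C((C,B_1),\ldots,(C,B_n))$ as $\sum_\gamma B_1(\gamma)\Vol^{(n-1)}(B_2^\gamma,\ldots,B_n^\gamma)$, where $\gamma$ runs over primitive covectors with positive coordinates. Passing from $B_1$ to $B_1' = \mathrm{conv}(B_1 \cup \{V\cdot e_1\})$ only changes the first factor $B_1(\gamma)$ in each summand, leaving all the mixed volumes $\Vol^{(n-1)}(B_2^\gamma,\ldots,B_n^\gamma)$ untouched. So the difference $\Vol_C((C,B_1'),\ldots) - \Vol_C((C,B_1),\ldots)$ equals $\sum_\gamma \big(B_1'(\gamma) - B_1(\gamma)\big)\Vol^{(n-1)}(B_2^\gamma,\ldots,B_n^\gamma)$. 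First I would observe that adding a point can only decrease (or preserve) a support function, so each coefficient $B_1'(\gamma) - B_1(\gamma) \le 0$; combined with nonnegativity of the $(n-1)$-dimensional mixed volumes, this already gives $\Vol_C((C,B_1'),\ldots) \le V$. The task then reduces to showing no strict decrease occurs.

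The key step is therefore: for every primitive positive covector $\gamma$ for which $\Vol^{(n-1)}(B_2^\gamma,\ldots,B_n^\gamma) \neq 0$, we must have $B_1'(\gamma) = B_1(\gamma)$, i.e. $\gamma(V e_1) = V\gamma_1 \ge B_1(\gamma)$, so that adjoining $V e_1$ does not lower the infimum. The way I would argue this is to bound $B_1(\gamma)$ from below. Since all $B_i$ have bounded difference with $C$ (our standing convention), $B_1 \subseteq C + (\text{bounded set})$; more usefully, I would use that $B_1$ does not contain the origin (as noted repeatedly in the paper for positive mixed volume, via Theorem \ref{int}), and in fact relate $B_1(\gamma)$ to the volume. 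The cleanest route: compare with the extreme case. If $\Vol^{(n-1)}(B_2^\gamma,\ldots,B_n^\gamma)>0$ then this single term already contributes $B_1(\gamma)\cdot \Vol^{(n-1)}(\cdots)$ to the total $V$; since every other term in the sum is nonnegative, $B_1(\gamma)\cdot\Vol^{(n-1)}(B_2^\gamma,\ldots,B_n^\gamma) \le V$, hence $B_1(\gamma) \le V$ (as $\Vol^{(n-1)} \ge 1$ when nonzero, being a lattice mixed volume). Then $B_1(\gamma) \le V = V\cdot 1 \le V\gamma_1 = \gamma(V e_1)$ using $\gamma_1 \ge 1$ for a primitive positive integer covector. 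This shows the infimum defining $B_1'(\gamma)$ is unchanged, so the coefficient $B_1'(\gamma)-B_1(\gamma)$ vanishes on exactly the terms where $\Vol^{(n-1)} \ne 0$.

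Putting it together: in the difference $\sum_\gamma (B_1'(\gamma)-B_1(\gamma))\Vol^{(n-1)}(B_2^\gamma,\ldots,B_n^\gamma)$, each summand is zero — either because $\Vol^{(n-1)}(B_2^\gamma,\ldots,B_n^\gamma)=0$, or because $B_1'(\gamma)=B_1(\gamma)$ by the bound just established. Hence $\Vol_C((C,B_1'),(C,B_2),\ldots,(C,B_n)) = \Vol_C((C,B_1),\ldots,(C,B_n)) = V$. One subtlety to check is that $B_1'$ still lies in $\mathcal{M}_C$, i.e. still has support cone $C$ and bounded symmetric difference with $C$: adding a single point of $C$ to a polyhedron already sandwiched around $C$ keeps both properties, since $V e_1 \in C$ and the convex hull stays inside $C$ while the added region is bounded. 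The main obstacle I anticipate is making the lower bound $B_1(\gamma) \le V$ fully rigorous — in particular justifying $\Vol^{(n-1)} \ge 1$ whenever it is nonzero (it is the lattice mixed volume of lattice polytopes, hence a nonnegative integer) and confirming that the other summands are genuinely nonnegative (each $B_i(\gamma)$ is finite since the support cone is all of $C$, and $B_1(\gamma)$ itself is finite and could a priori be negative, but the term of interest dominates). Once those nonnegativity and integrality facts are pinned down, the argument closes cleanly.
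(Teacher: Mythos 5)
Your proposal is correct and follows essentially the same route as the paper's proof: apply the formula of Theorem \ref{mixv}, note that only the factors $B_1(\gamma)$ change, and for each $\gamma$ with $\Vol^{(n-1)}(B_2^\gamma,\ldots,B_n^\gamma)\neq 0$ deduce $B_1(\gamma)\le V\le V\gamma_1=\gamma(Ve_1)$ so that adjoining $Ve_1$ does not lower the support function. The subtleties you flag (integrality of the nonzero lattice mixed volumes and nonnegativity of the remaining summands) are exactly the points the paper uses implicitly, so your write-up is if anything slightly more careful.
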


\begin{proof}
By monotonicity of the mixed volume, we have $\Vol_C((C,B'_1), (C,B_2),...,(C,B_n)) \leq V$.

It remains to prove that $\Vol_C((C,B'_1), (C,B_2),...,(C,B_n))  \geq V$. We shall do so by computing $\Vol_C((C,B'_1), (C,B_2),...,(C,B_n))$ and $\Vol_C((C,B_1), (C,B_2),...,(C,B_n))$ with Theorem \ref{mixv}, and comparing every term in both expressions.

The terms are parameterized by integer linear functions $\gamma$ on $\mathbb{R}^n$ with positive coefficients. Let us pick an arbitrary $\gamma$ and prove that $B'_1(\gamma)\Vol^{(n-1)}(B_2^\gamma ,\ldots, B_n^\gamma ) \geq B_1(\gamma)\Vol^{(n-1)}(B_2^\gamma ,\ldots, B_n^\gamma )$.

First of all, if $\Vol^{(n-1)}(B_2^\gamma , ..., B_n^\gamma) = 0$, the latter inequality is trivially satisfied. So we assume that $\Vol^{(n-1)}(B_2^\gamma , ..., B_n^\gamma ) > 0$ and aim at proving $B'_1(\gamma) \geq B_1(\gamma)$.

Since $\Vol_C((C,B_1),...,(C,B_n)) = V$, by the formula of Theorem \ref{mixv}, we know $B_1(\gamma) \leq V$ when $\Vol^{(n-1)}(B_2^\gamma,...,B_n^\gamma) \geq 1$. If the linear function $\gamma$ attains its minimum on the polyhedron $B'_1$ at point $c$, then $B'_1(\gamma) \geq V \geq B_1(\gamma)$. Otherwise, if $B'_1(\gamma)$ does not attain its minimum at the point $c$, $B'_1(\gamma) = B_1(\gamma)$. As a result, in both cases, $B'_1(\gamma) \geq B_1(\gamma)$, so by Theorem \ref{mixv}, $\Vol_C((C,B'_1), (C,B_2),...,(C,B_n)) \geq V$. 

Correspondingly, $\Vol_C((C,B'_1),(C,B_2),...,(C,B_n)) = V$.
\end{proof}

\begin{lemma}
If $B_1,...,B_n$ are $n$-dimensional minimal polyhedra with mixed volume $V$, and $B'_1 = \mathrm{conv}(C_{V,...,V} \cup B_1)$, then $\Vol_C((C,B'_1),...,(C,B_n)) = V$. 
\end{lemma}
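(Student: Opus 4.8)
The plan is to follow the strategy of the previous lemma almost verbatim, the only genuinely new observation being that the support function of $C_{V,\ldots,V}$ is at least $V$ on every primitive integer covector with positive entries. First I would record the easy direction. Since $B_1 \subseteq B'_1$ we have $C\setminus B'_1 \subseteq C\setminus B_1$, and since $C_{V,\ldots,V}\subseteq C$ the polyhedron $B'_1$ still has recession cone $C$ and bounded symmetric difference with $C$; thus $(C,B'_1)\in\mathcal{M}_C$, the mixed volume on the left is well defined, and by monotonicity it is at most $\Vol_C((C,B_1),\ldots,(C,B_n)) = V$. It remains to prove the reverse inequality.

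For the lower bound I would apply Theorem \ref{mixv} to both tuples $(C,B_1),(C,B_2),\ldots,(C,B_n)$ and $(C,B'_1),(C,B_2),\ldots,(C,B_n)$ and compare the terms indexed by each primitive integer covector $\gamma=(\gamma_1,\ldots,\gamma_n)$ with positive entries. Because the support function of the convex hull of a union is the minimum of the support functions, $B'_1(\gamma) = \min\{B_1(\gamma),\, C_{V,\ldots,V}(\gamma)\}$, so it suffices to show $C_{V,\ldots,V}(\gamma)\ge B_1(\gamma)$ whenever $\Vol^{(n-1)}(B_2^\gamma,\ldots,B_n^\gamma)>0$ (for every other $\gamma$ the corresponding terms vanish in both expressions). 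The infimum of $\gamma$ over $C_{V,\ldots,V}$ is attained at one of its vertices $Ve_1,\ldots,Ve_n$, since the recession cone of $C_{V,\ldots,V}$ is $C$ and $\gamma$ is positive on $C$; hence $C_{V,\ldots,V}(\gamma) = V\min_i\gamma_i \ge V$, each $\gamma_i$ being a positive integer. On the other hand, exactly as in the proof of the previous lemma, whenever $\Vol^{(n-1)}(B_2^\gamma,\ldots,B_n^\gamma)\ge 1$ the identity $\sum_\gamma B_1(\gamma)\Vol^{(n-1)}(B_2^\gamma,\ldots,B_n^\gamma)=V$ forces $B_1(\gamma)\le V$. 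Combining the two bounds gives $C_{V,\ldots,V}(\gamma)\ge V\ge B_1(\gamma)$, hence $B'_1(\gamma)\ge B_1(\gamma)$; so every term of the sum for $(C,B'_1),\ldots$ dominates the corresponding term for $(C,B_1),\ldots$, and therefore $\Vol_C((C,B'_1),(C,B_2),\ldots,(C,B_n))\ge V$, which yields equality.

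I do not anticipate a real obstacle: this is a ``global'' version of the previous lemma and its proof reuses the same key inequality $B_1(\gamma)\le V$. The only points needing an extra line of justification are the identity $C_{V,\ldots,V}(\gamma)=V\min_i\gamma_i$ and the check that $(C,B'_1)$ is an admissible pair so that Theorem \ref{mixv} applies. Alternatively, one can deduce the lemma directly from the previous one by adjoining the vertices $Ve_1,\ldots,Ve_n$ to $B_1$ one at a time (using coordinate symmetry at each step), since $\mathrm{conv}(B_1\cup\{Ve_1,\ldots,Ve_n\})=B'_1$: indeed $C_{V,\ldots,V}=\mathrm{conv}\{Ve_1,\ldots,Ve_n\}+C$, and $B_1$ has recession cone $C$, so $\mathrm{conv}(B_1\cup\{Ve_1,\ldots,Ve_n\})$ already contains $C_{V,\ldots,V}$.
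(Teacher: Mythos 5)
Your proposal is correct. The alternative you sketch in your last sentence --- observing that $\mathrm{conv}(C_{V,\ldots,V}\cup B_1)=\mathrm{conv}(\{Ve_1,\ldots,Ve_n\}\cup B_1)$ and adjoining the points $Ve_1,\ldots,Ve_n$ one at a time via the previous lemma --- is exactly the paper's one-line proof. Your primary argument is just an unrolled, self-contained version of the same computation: the inequality $C_{V,\ldots,V}(\gamma)=V\min_i\gamma_i\ge V\ge B_1(\gamma)$ on the relevant covectors is precisely what makes each application of the previous lemma succeed, so the two routes coincide in substance (and both inherit from the previous lemma the implicit use of $B_i\subseteq C$ to get nonnegativity of the terms and hence $B_1(\gamma)\le V$).
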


\begin{proof}
Since $\mathrm{conv}(B_{V,...,V} \cup B_1) = \mathrm{conv}(\cup_{i=1}^n (V \cdot e_i)\cup B_1)$, applying Lemma 3 several times, we have $\Vol_C((C,B'_1), (C,B_2),...,(C,B_n)) = V$. 
\end{proof}

\begin{proof}[proof of Theorem \ref{finv}]
Assume there exists a tuple of minimal polyhedra $B_1, \ldots ,B_n$ with mixed volume $V$, such that up to change of order of polyhedra and coordinates, $B_1$ does not contain $C_{V, \ldots ,V}$. Denote $B'_1 = \mathrm{conv}(C_{V, \ldots ,V} \cup B_1)$. By definition of minimal polyhedra, $\Vol_C((C,B'_1),(C,B_2), \ldots ,(C,B_n)) < \Vol_C((C,B_1), \ldots ,(C,B_n)) = V$. \\
However, by Lemma 4, $\Vol_C((C,B'_1),(C,B_2), \ldots ,(C,B_n)) = V$, which is a contradiction. 
Thus, $C_{V, \ldots ,V} \subset B_1, \ldots ,B_n$ for all minimal polyhedra.
\end{proof}

Since there are only finitely many tuples of polyhedra such that $C_{V, \ldots ,V} \subseteq B_1, \ldots ,B_n$ for any given dimension $n$, we get the following conclusion:
\begin{theorem}
 There are finitely many minimal by inclusion tuples of polyhedra for a given mixed volume.
\end{theorem}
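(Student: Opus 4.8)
The plan is to combine Theorem \ref{finv} with a simple finiteness count. Theorem \ref{finv} tells us that \emph{every} tuple of minimal polyhedra $B_1,\dots,B_n$ with mixed volume $V$ satisfies $C_{V,\dots,V}\subseteq B_i$ for all $i$. Since each $B_i$ is assumed to differ from $C$ by a bounded set (our standing convention), each $B_i$ is sandwiched between $C_{V,\dots,V}$ and $C$: that is, $C_{V,\dots,V}\subseteq B_i\subseteq C$. So the first step is to observe that the region $C\setminus C_{V,\dots,V}$ is the bounded simplex $\{x\in C: x_1+\dots+x_n\le V\}$ (after the coordinate normalization in which $C_{V,\dots,V}$ is cut off by the hyperplane through $Ve_1,\dots,Ve_n$), and it contains only finitely many lattice points.

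Next I would argue that each minimal lattice polyhedron $B_i$ with $C_{V,\dots,V}\subseteq B_i\subseteq C$ is determined by the finite set $B_i\cap(C\setminus \mathrm{int}\,C_{V,\dots,V})$ of lattice points it ``adds'' to $C_{V,\dots,V}$ inside that bounded simplex. More precisely, $B_i=\mathrm{conv}\big(C_{V,\dots,V}\cup S_i\big)$ for some subset $S_i$ of the finitely many lattice points of the simplex $C\setminus C_{V,\dots,V}$; indeed the vertices of $B_i$ that do not already belong to $C_{V,\dots,V}$ must lie in this bounded region and be lattice points. Hence there are at most $2^{N}$ choices for each $B_i$, where $N=\#(\Z^n\cap(C\setminus C_{V,\dots,V}))$ is finite, and therefore at most $(2^N)^n$ choices for the whole tuple $(B_1,\dots,B_n)$. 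This already bounds the number of minimal tuples for fixed $V$ and fixed coordinate normalization.

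Finally I would account for the phrase ``up to changing the order of polyhedra and coordinates'': the classification is up to the action of permuting the $n$ polyhedra and permuting the $n$ coordinates (equivalently, up to lattice automorphisms preserving $C$, which are exactly the coordinate permutations). Since that group is finite and acts on a finite set of normalized tuples, the number of minimal tuples up to this equivalence is still finite. Putting these steps together — Theorem \ref{finv} pins every minimal tuple inside a fixed bounded box, the box has finitely many lattice points, a minimal lattice polyhedron in the box is one of finitely many, and the symmetry group is finite — yields the theorem. Every tuple of finite mixed volume is, by definition of minimality and a standard ``shrink a vertex at a time'' descent, contained (componentwise, up to the same symmetries) in one of these finitely many minimal tuples.

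The main obstacle, such as it is, is not depth but care: one must make precise the passage from ``$C_{V,\dots,V}\subseteq B_i\subseteq C$'' to ``$B_i$ is the convex hull of $C_{V,\dots,V}$ with a subset of a fixed finite lattice point set,'' i.e.\ check that no relevant vertex of $B_i$ can escape the bounded simplex $C\setminus C_{V,\dots,V}$ and that distinct such subsets that give the same polyhedron are harmless for a mere finiteness count. One should also state explicitly why every finite-mixed-volume tuple lies below some minimal one (repeatedly replace $B_i$ by $\mathrm{conv}(B_i\cup\delta)$-reversed, i.e.\ shrink, while the mixed volume stays $\ge$ some fixed value and is non-increasing, terminating because the polyhedra are lattice and bounded-difference). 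Neither point is hard, but both deserve a sentence.
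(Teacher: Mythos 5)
Your proposal is correct and follows essentially the same route as the paper: the paper likewise deduces the theorem directly from Theorem \ref{finv} by observing that there are only finitely many lattice polyhedra squeezed between $C_{V,\dots,V}$ and $C$. You simply spell out the finite count ($2^N$ subsets of the lattice points of the bounded simplex, times the finite symmetry group) that the paper leaves as a one-line remark.
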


\end{document}